\documentclass{article}

\usepackage{multicol}

\usepackage{amssymb,amsmath,amsthm}
\usepackage[]{algorithm2e}
\usepackage{tikz}

\newtheorem{definition}{Definition}[section]
\newtheorem{theorem}[definition]{Theorem}

\newtheorem{lemma}[definition]{Lemma}

\newtheorem{conjecture}[definition]{Conjecture}

\usepackage{ifxetex}
\ifxetex
  \usepackage{fontspec}
\else
  \usepackage[T1]{fontenc}
  \usepackage[utf8]{inputenc}
  \usepackage{lmodern}
\fi

\title{Counterexamples of the Bhattacharya-Friedland-Peled conjecture}

\date{September 13, 2021}

\author{
Yen-Jen Cheng
\thanks{Department of Applied Mathematics, National Yang Ming Chiao Tung University, Hsinchu 30010, Taiwan R.O.C.
{\tt Email: yjc7755@gmail.com}}
\and
Chia-An Liu
\thanks{Department of Mathematics, Soochow University, Taipei 111002, Taiwan R.O.C.
{\tt Email: liuchiaan8@gmail.com}}
\and
Chih-wen Weng
\thanks{Department of Applied Mathematics, National Yang Ming Chiao Tung University, Hsinchu 30010, Taiwan R.O.C.
{\tt Email: weng@math.nctu.edu.tw }}
}

\begin{document}
\maketitle

\begin{abstract}
The Brauldi-Hoffman conjecture, proved by Rowlinson in 1988, characterized the graph with maximal spectral radius among all simple graphs with prescribed number of edges. In 2008, Bhattacharya, Friedland, and Peled proposed an analog, which will be called the BFP conjecture in the following, of the Brauldi-Hoffman conjecture for the bipartite graphs with fixed numbers of edges in the graph and vertices in the bipartition. The BFP conjecture was proved to be correct if the number of edges is large enough by several authors. However, in this paper we provide some counterexamples of the BFP conjecture.
\end{abstract}

\noindent MSC 2010: 05C50, 15A18.

\noindent Keywords: Bipartite graph, spectral radius, degree sequence, BFP conjecture.

\section{Introduction and preliminaries}    \label{sec_introduction}

Let $G$ be a simple graph on $n$ vertices. The {\it adjacency matrix} $A=(a_{ij})$ of $G$ is a 0-1 square matrix of order $n$ with rows and columns indexed by the vertex set $V(G)$ of $G$ such that for two vertices $i,j\in V(G)$, $a_{ij}=1$ if and only if $i,j$ are adjacent in $G$. The {\it spectral radius} $\rho(G)$, or $\rho(A),$ of $G$ is the largest eigenvalue of the adjacency matrix $A$ of $G$. In 1976, Brauldi and Hoffman proposed the problem~\cite[p.438]{bfvs78} of finding the maximum spectral radius of a graph with exactly $e$ edges. One decade later in 1985, they gave the Brauldi-Hoffman conjecture in~\cite{bh85} stating that the maximum spectral radius of a graph with $e$ edges is attained by taking a complete graph and adding a new vertex which is adjacent to a corresponding number of vertices in the complete graph. This conjecture was proved in 1988 by Rowlinson~\cite{r88}. See~\cite{f88,s87} for the proof of some partial cases.

In 2008, a bipartite graphs analogue of the Brauldi-Hoffman conjecture was settled by Bhattacharya, Friedland, and Peled~\cite{bfp08} with the following statement: For a connected bipartite graph $G$ with $e$ edges, its spectral radius $\rho(G)\leq \sqrt{e}$, and  equality holds if and only if $G$ is a complete bipartite graph. Throughout the paper let $p, q, e$ be positive integers with $p\leq q$ and $e<pq$. Let $\mathcal{K}(p,q,e)$ be the family of subgraphs of $K_{p,q}$ with precisely $e$ edges and with no isolated vertices and {\it which are not complete bipartite graphs}, where $K_{p,q}$ is the complete graph with bipartition orders $p$ and $q$. The BFP conjecture was then given as follows~\cite[Conjecture~1.2]{bfp08}.
\begin{conjecture}  \label{conj_BFP}
Let $p,q,e$ be positive integers satisfying $e<pq$. An extremal graph that solves
$$\max_{G\in\mathcal{K}(p,q,e)}\rho(G)$$
is obtained from a complete bipartite graph by adding one vertex and a corresponding number of edges.
\end{conjecture}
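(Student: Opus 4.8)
The plan is to first reduce the optimization over the large family $\mathcal{K}(p,q,e)$ to a much smaller, structured subfamily, and only then to carry out an explicit spectral comparison inside it. Let $G$ be an extremal graph and let $x$ be a positive Perron eigenvector of its adjacency matrix. The first step is a \emph{shifting} (rotation) argument: if $u,v$ lie in the same part of the bipartition with $x_u\geq x_v$, and some vertex $w$ is adjacent to $v$ but not to $u$, then replacing the edge $vw$ by $uw$ fixes the edge count and the bipartition orders while not decreasing the Rayleigh quotient $x^{\top}Ax/(x^{\top}x)$, hence not decreasing $\rho$. Iterating forces the neighborhoods within each part to be linearly ordered by inclusion, so the extremal graph may be taken to be a \emph{chain graph} (double nested graph); after sorting rows and columns by degree, its biadjacency matrix is a staircase (Ferrers) $0$--$1$ matrix.

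The second step is to parametrize the chain graphs with parameters $(p,q,e)$ and to locate the conjectured shape among them. The graphs described in the conjecture---a complete bipartite $K_{a,b}$ together with one extra vertex joined to $c$ vertices on the opposite side, where $e=ab+c$---form one natural subfamily of staircase shapes. To prove the conjecture I would show that every other staircase shape with the same $(p,q,e)$ has strictly smaller spectral radius.

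The decisive third step is this spectral comparison. Since a chain graph admits an equitable partition into the cells of its nested structure, its spectral radius equals that of a small nonnegative quotient matrix $B$ whose entries are the cell sizes, so $\rho(G)$ is the largest root of a low-degree polynomial in those sizes. The strategy is to pass from one staircase to a competitor by a single edge move---peeling one cell from one corner and attaching it at another---and to track the sign of the induced change in $\rho$ through the characteristic polynomial of $B$, or via a Perron-eigenvector perturbation and interlacing estimate.

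The hard part will be exactly this last comparison. For chain graphs the map from shape to spectral radius is genuinely subtle, and a single monotone edge move need not always favor the conjectured form. I expect the inequality to hold when $e$ is large---which is why the conjecture has been confirmed in that regime---but to become fragile for small $e$, where competing staircase shapes (for instance those obtained by deleting edges from two distinct corners of $K_{p,q}$ rather than from a single added vertex) can overtake the conjectured graph. Pinpointing the parameter ranges where the comparison reverses is precisely where I expect the argument to break, and, given the aim of this paper, where the counterexamples must live.
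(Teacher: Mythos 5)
There is a fundamental obstruction: the statement you are trying to prove is \emph{false}, and the paper exists precisely to refute it, so no completion of your plan can succeed; your own closing paragraph is where the truth lies, and the competitor you vaguely describe (a staircase with two corners rather than one) is exactly the paper's counterexample. Concretely, fix $p>2$, a positive integer $k$, $q>kp+2$, and $e=p(q-k)$. The paper's Lemma~\ref{l2.5} shows by a short counting argument that the graphs in $\mathcal{K}(p,q,e)$ of the conjectured form are exactly ${}^eK_{p,q-a}=G_{D_{k,a}}$ with $D_{k,a}=((q-a)^{[p-1]},\,q-a-(k-a)p)$ for $a=0,1,\ldots,k-1$; no graph of type $K^e_{s,t}$ exists for these parameters, since one would need $0<p(t-q+k)<p$, impossible for integers. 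Against these the paper plays $K^\pm_{p,q-k}$, the chain graph with degree sequence $(q-k+1,(q-k)^{[p-2]},q-k-1)$: it has three distinct degrees on the $p$-side, hence is not of the conjectured form, yet it lies in $\mathcal{K}(p,q,e)$. The comparison is carried out with exactly the machinery of your third step: under the partition $\{\{1\},\{2,\ldots,p-1\},\{p\}\}$ the matrix $H(D)=F(D)F(D)^T$ has a $3\times3$ equitable quotient, so each $\rho^2$ is the largest root of an explicit cubic --- $f(x)=x^3-ex^2+(k-a)p(p-1)(q-a-(k-a)p)x$ for ${}^eK_{p,q-a}$ and $g(x)$ in \eqref{eq_char_poly_D*} for $K^\pm_{p,q-k}$ --- and the paper verifies that $f(x)-g(x)$ is linear in $x$ with both coefficients positive for every admissible $a$, whence $f>g$ on $(0,\infty)$ and $\rho(K^\pm_{p,q-k})>\rho({}^eK_{p,q-a})$ for all $a$ (Lemma~\ref{l2.6}, Theorem~\ref{thm_e_p}). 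So the ``decisive third step'' of your plan does not merely become fragile: it provably reverses on an infinite family of parameters, and no repair can rescue the conjecture as stated.

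Two remarks on what in your proposal is salvageable and consistent with the paper. Your step 1, the Perron-vector shifting reduction of the extremal problem to chain graphs (Ferrers-diagram biadjacency matrices), is sound and goes back to Bhattacharya, Friedland, and Peled; it is compatible with the refutation, because the counterexample $K^\pm_{p,q-k}$ is itself a chain graph --- the failure is not in restricting to staircases but in \emph{which} staircase wins. Also, your guess that trouble occurs for ``small $e$'' should be sharpened: for $pq-p<e<pq$ the refined Conjecture~\ref{conj_Fu} of Chen {\it et al.} was proved by Liu and Weng, while the counterexamples above all have $e=p(q-k)\leq pq-p$; so the known positive results and the counterexamples sit on opposite sides of the threshold $e=pq-p$, which is exactly where the conjecture stops being reliable.
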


Furthermore, in~\cite[Theorem~8.1]{bfp08}, the BFP conjecture was proved for the case that $e=st-1$ if the positive integers $s,t$ satisfy $2\leq s\leq p\leq t\leq q\leq t+\frac{t-1}{s-1}.$ They also verified that the only extremal graph is obtained from $K_{s,t}$ by deleting one edge. The BFP conjecture did not indicate that the adding vertex of the obtained extremal graph goes into which partite set. Hence, we define two families of bipartite graphs as follows:
  For  $e> pq-q$ (resp. $e> pq-p$), let $^eK_{p,q}$ (resp. $K^e_{p,q}$) denote the graph which is obtained from $K_{p,q}$ by deleting $pq-e$ edges which are incident on a common vertex in the partite set of order $p$ (resp. of order $q$). Then, the extremal graph described in the BFP conjecture is $G={}^eK_{s,t}$ or $G=K^e_{s,t}$ for some positive integers $s, t$ with $s\leq t$, where the equality notation $=$ is graph isomorphism. For example, one may see Figure 1 for the graphs $^4K_{2,3}$ and $K^5_{2,3}.$

\medskip

\begin{center}
\begin{multicols}{2}
\begin{picture}(45,80)
\put(10,40){\circle*{3}} \put(10,60){\circle*{3}}
\put(40,30){\circle*{3}} \put(40,50){\circle*{3}} \put(40,70){\circle*{3}}
\qbezier(10,60)(25,45)(40,30) \qbezier(10,60)(25,55)(40,50) \qbezier(10,60)(25,65)(40,70)
\qbezier(10,40)(25,55)(40,70)
\put(10,0){$^4K_{2,3}$}
\end{picture}

\begin{picture}(45,80)
\put(10,40){\circle*{3}} \put(10,60){\circle*{3}}
\put(40,30){\circle*{3}} \put(40,50){\circle*{3}} \put(40,70){\circle*{3}}
\qbezier(10,60)(25,45)(40,30) \qbezier(10,60)(25,55)(40,50) \qbezier(10,60)(25,65)(40,70)
\qbezier(10,40)(25,55)(40,70) \qbezier(10,40)(25,45)(40,50)
\put(-2,0){$K^5_{2,3}={}^5K_{2,3}$}
\end{picture}

\end{multicols}

\bigskip

\textbf{Figure 1.} The graphs $^4K_{2,3}$ and  $K^5_{2,3}$.
\end{center}
\bigskip

In 2010~\cite{cfksw10}, Chen {\it et al.} gave an affirmative answer to the BFP conjecture provided that $e=pq-2$ by comparing the spectral radii of three bipartite graphs obtained from $K_{p,q}$ by deleting two edges. Moreover, they refined the BFP conjecture under the assumption that $e\geq pq-p+1$ as follows~\cite[Conjecture~11]{cfksw10}.
\begin{conjecture}  \label{conj_Fu}
Let $p,q,e$ be positive integers satisfying $p\leq q$ and $pq-p<e<pq$. An extremal graph that solves
$$\max_{G\in\mathcal{K}(p,q,e)}\rho(G)$$
is obtained by $K^e_{p,q}.$
\end{conjecture}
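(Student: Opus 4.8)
The plan is to reformulate everything through the biadjacency matrix. For $G\subseteq K_{p,q}$ let $B$ be the $p\times q$ $0$--$1$ matrix of edges, so that the adjacency matrix is $\left(\begin{smallmatrix}0&B\\ B^{\top}&0\end{smallmatrix}\right)$ and $\rho(G)=\sigma_1(B)$, the largest singular value. Maximizing $\rho$ over $\mathcal{K}(p,q,e)$ thus becomes: delete $d:=pq-e$ entries from the all-ones matrix $J_{p\times q}$ so as to maximize $\sigma_1$. In the regime $pq-p<e<pq$ we have $1\le d\le p-1$, and since the total number of deleted entries is $d<p\le q$, no row or column can lose more than $d$ entries; hence every degree stays at least $p-d\ge1$, so no isolated vertex ever occurs and $G$ is automatically non-complete. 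This makes the admissible family stable under the local switches used next.

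First I would reduce to chain graphs (difference graphs). Let $\phi>0$ be a Perron eigenvector of the maximizer $G$. If two vertices $u,v$ on the same side have incomparable neighborhoods, choose $j\in N(u)\setminus N(v)$ and $j'\in N(v)\setminus N(u)$, assume $\phi_u\ge\phi_v$, and replace edge $vj'$ by $uj'$. This preserves $e$, stays in $\mathcal{K}(p,q,e)$ by the degree bound above, and changes $\phi^{\top}A\phi$ by $2\phi_{j'}(\phi_u-\phi_v)\ge0$, so $\rho$ does not drop while the pair becomes comparable. Iterating on both sides, the maximizer may be taken to be a staircase matrix, i.e.\ its deleted cells form a Young diagram $\lambda\vdash d$ in a corner of the grid; here $K^{e}_{p,q}$ is the single column $\lambda=(1^{d})$ and ${}^{e}K_{p,q}$ the single row $\lambda=(d)$.

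Next I would compare the competing shapes via equitable partitions. Grouping rows and columns of $B$ by degree yields an equitable partition whose quotient splits into a small pair $(B_Q,C_Q)$ with $\rho^{2}=\rho(B_QC_Q)$, and one checks (every block being complete or empty) that $\operatorname{tr}(B_QC_Q)=e$ for every chain graph. For the two extreme shapes the quotients are $2\times2$ and give
\[
\rho^{2}=\tfrac12\Bigl(e+\sqrt{e^{2}-4\det(B_QC_Q)}\Bigr),\qquad
\det(B_QC_Q)=\begin{cases}(q-1)\,d(p-d)&\lambda=(1^{d}),\\[2pt](p-1)\,d(q-d)&\lambda=(d).\end{cases}
\]
With the trace fixed, a larger $\rho$ means a smaller determinant, and the identity $(p-1)d(q-d)-(q-1)d(p-d)=d(q-p)(d-1)\ge0$ (using $q\ge p$, $d\ge1$) already proves $\rho(K^{e}_{p,q})\ge\rho({}^{e}K_{p,q})$, with equality only when $p=q$ or $d=1$. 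What remains, and what carries the whole statement, is to show the single column beats every other $\lambda\vdash d$.

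The hard part will be precisely this uniform comparison, and a majorization shortcut will not work: $\sigma_1$ is not monotone in the dominance order on $\lambda$. For $p=4,\,q=5,\,d=3$ one finds $\rho^{2}\approx16.26,\,15.55,\,15.87$ for $\lambda=(1^{3}),(2,1),(3)$ respectively, so the $L$-shape $(2,1)$---which lies strictly between $(3)$ and $(1^{3})$ in dominance---has the \emph{smallest} spectral radius, while the global maximum still sits at the single column. I would therefore abandon monotonicity in favor of a spectral upper bound expressed through the row- and column-degree sequences that is tight exactly at staircase matrices, and then solve the resulting discrete optimization: among all bigraphic degree sequences realizable inside $K_{p,q}$ with $e$ edges, the bound should be maximized by the sequence of $K^{e}_{p,q}$ (one $q$-vertex of degree $p-d$ together with $d$ $p$-vertices of degree $q-1$). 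Establishing tightness of the bound at $K^{e}_{p,q}$ and eliminating every rival degree sequence is where the genuine difficulty---and the real subtlety of the conjecture---lies.
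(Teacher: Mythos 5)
Your proposal is not a proof; it is a correct reduction followed by an honest admission that the step carrying the whole statement is missing. What you actually establish is: (i) the maximizer may be assumed to be a chain graph, so the deleted cells form a Young diagram $\lambda\vdash d$ with $d=pq-e\leq p-1$ (this reduction is standard and goes back to Bhattacharya--Friedland--Peled; your switching argument additionally needs a termination or tie-breaking device, since a single switch only gives $\rho$ non-decreasing); and (ii) the single-column shape beats the single-row shape, via the identity $(p-1)d(q-d)-(q-1)d(p-d)=d(q-p)(d-1)\geq 0$, which I verified and is correct, as are your quotient computations. But the conjecture asserts that $K^e_{p,q}$ beats \emph{every} graph in $\mathcal{K}(p,q,e)$, which after your reduction means every partition $\lambda\vdash d$, and for that you offer only a program (``a spectral upper bound \dots tight exactly at staircase matrices,'' ``the bound should be maximized by \dots''), not an argument. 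Your own example $p=4$, $q=5$, $d=3$, with $\rho^{2}\approx 16.26,\,15.55,\,15.87$ for $\lambda=(1^{3}),(2,1),(3)$, shows precisely why this cannot be finessed: $\rho$ is not monotone in the dominance order, so no interpolation between the two extremes you handled can dispose of the intermediate shapes. The uniform comparison against all $\lambda\vdash d$ is the entire difficulty, and it is absent.

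For the record, this paper contains no proof of the statement either: Conjecture~\ref{conj_Fu} is quoted from Chen {\it et al.}~\cite{cfksw10}, and its proof is attributed to Liu and Weng~\cite{lw15}; the machinery developed here (the matrix $H(D)=F(D)F(D)^T$ and its equitable quotients) is deployed for the opposite purpose, namely constructing counterexamples to Conjecture~\ref{conj_BFP} in the regime $e\leq pq-p$. So there is no internal proof to compare yours against, but measured against what a complete argument must contain --- a sharp bound on $\rho$ in terms of degree data together with an equality analysis that singles out $K^e_{p,q}$ among all chain graphs, which is exactly the content of the cited work --- your proposal stops at the point where that work begins.
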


The assumption $pq-p<e<pq$ above ensures that every graph in $\mathcal{K}(p,q,e)$ including $^eK_{p,q}$ and $K^e_{p,q}$ is connected.  Conjecture~\ref{conj_Fu} was proved by Liu and Weng~\cite{lw15} in 2015. For applications, there are extending results on the spectral characterization of the nearly complete bipartite graphs~\cite{cfw18,lw17}.
However, as $e\leq pq-p$, things have changed. Let $K^\pm_{p,q}$ denote the bipartite graph obtained from $K_{p,q}$ by deleting an edge $e_1$, and then adding an edge $e_2$, not incident with the previously deleted edge $e_1$,
joining a new vertex and a vertex in the partite set of order $p$.  For example, see Figure 2 for the graphs $K^\pm_{2,3}$ and $K^\pm_{3,3}$.
\medskip

\begin{center}
\begin{multicols}{2}

\begin{picture}(45,80)
\put(10,40){\circle*{3}} \put(10,60){\circle*{3}}
\put(40,10){\circle*{3}}
\put(40,30){\circle*{3}} \put(40,50){\circle*{3}} \put(40,70){\circle*{3}}
\qbezier(10,60)(25,45)(40,30) \qbezier(10,60)(25,55)(40,50) \qbezier(10,60)(25,65)(40,70) \qbezier(10,60)(25,35)(40,10)
\qbezier(10,40)(25,55)(40,70) \qbezier(10,40)(25,45)(40,50)
\put(-2,-7){$K^\pm_{2,3}={}^6K_{2,4}$}
\end{picture}

\begin{picture}(45,80)
 \put(10,20){\circle*{3}}
\put(10,40){\circle*{3}} \put(10,60){\circle*{3}}
\put(40,10){\circle*{3}}
\put(40,30){\circle*{3}} \put(40,50){\circle*{3}} \put(40,70){\circle*{3}}
\qbezier(10,40)(25,35)(40,30)
\qbezier(10,20)(25,45)(40,70)
\qbezier(10,20)(25,35)(40,50)
\qbezier(10,60)(25,45)(40,30) \qbezier(10,60)(25,55)(40,50) \qbezier(10,60)(25,65)(40,70) \qbezier(10,60)(25,35)(40,10)
\qbezier(10,40)(25,55)(40,70) \qbezier(10,40)(25,45)(40,50)
\put(13,-7){$K^\pm_{3,3}$}
\end{picture}
\end{multicols}

\bigskip

\textbf{Figure 2.} The graphs $K^\pm_{2,3}$ and $K_{3, 3}^\pm$.

\end{center}
\bigskip

 We will show in Theorem~\ref{thm_e_p} of the next section that the BFP conjecture indeed fails with counterexample graphs of the form $K^\pm_{p,q-k}\in \mathcal{K}(p, q, e)$ under some restrictions on the positive integers $p, q, e$ and a nonnegative integer $k$.

\section{Counterexamples of the BFP conjecture}\label{sec_main}

Let $D=(d_1,d_2,\ldots,d_p)$ be a nonincreasing sequence of positive integers in which $d_1=q>d_p$ and $e=d_1+d_2+\cdots+d_p$. Then, a simple bipartite graph $G_D\in \mathcal{K}(p,q,e)$ is obtained as follows. Let $G_D$ denote the bipartite graph with bipartition $X\cup Y$, where $X=\{x_1,x_2,\ldots,x_p\},$ $Y=\{y_1,y_2,\ldots,y_q\}$, and $x_iy_j$ is an edge if and only if $j\leq d_i.$ The sequence $D$ also defines a 0-1 Ferrers diagram $F(D)$ that has $p$ rows and $q$ columns in which the $i$-th row of $F(D)$ is composed of $d_i$ 1's on the left and $(q-d_i)$ 0's on the right, for $i=1,2,\ldots,p.$ For example, if $D=(5,3,1,1)$ then the bipartite graph $G_{D}$ and its associated Ferrers diagram are shown in Figure 2.

\begin{center}
\begin{multicols}{2}
\begin{picture}(50,95)
\put(10,20){\circle*{3}} \put(10,40){\circle*{3}} \put(10,60){\circle*{3}} \put(10,80){\circle*{3}}
\put(40,10){\circle*{3}} \put(40,30){\circle*{3}} \put(40,50){\circle*{3}} \put(40,70){\circle*{3}} \put(40,90){\circle*{3}}
\qbezier(10,80)(25,85)(40,90) \qbezier(10,80)(25,75)(40,70) \qbezier(10,80)(25,65)(40,50) \qbezier(10,80)(25,55)(40,30) \qbezier(10,80)(25,45)(40,10)
\qbezier(10,60)(25,75)(40,90) \qbezier(10,60)(25,65)(40,70) \qbezier(10,60)(25,55)(40,50)
\qbezier(10,40)(25,65)(40,90)
\qbezier(10,20)(25,55)(40,90)
\end{picture}

\begin{equation}
F(5,3,1,1)=
\begin{array}{ccccc}
1 & 1 & 1 & 1 & 1  \\
1 & 1 & 1 & 0 & 0 \\
1 & 0 & 0 & 0 & 0 \\
1 & 0 & 0 & 0 & 0
\end{array}
\nonumber
\end{equation}
\end{multicols}
\textbf{Figure 2.} The graph $G_{(5,3,1,1)}$ and its associated Ferrers diagram.
\end{center}

The adjacency matrix $A$ of the bipartite graph $G_D$ with nonincreasing degree sequence $D=(d_1,d_2,\ldots,d_p)$ is
\begin{equation}    \label{eq_A}
A=\left(\begin{array}{cc}
    O_{p\times p} & F(D)  \\
    F(D)^T & O_{q\times q}
    \end{array}\right),
\end{equation}
where $O_{m\times n}$ is the $m$-by-$n$ zero matrix. Let $H(D):=F(D)F(D)^T$, which is the $p\times p$ matrix as follows:
\begin{equation}\label{H}
H(D)=(\min\{d_i,d_j\})_{1\leq i,j\leq p}=\left(\begin{array}{ccccc}
    d_1 & d_2 & d_3 & \cdots & d_p  \\
    d_2 & d_2 & d_3 & \cdots & d_p  \\
    d_3 & d_3 & d_3 & \cdots & d_p  \\
    \vdots & \vdots & \vdots & \ddots & \vdots    \\
    d_p & d_p & d_p & \cdots & d_p
    \end{array}\right).\end{equation}

\begin{lemma}   \label{lem_rhoA_rhoHD}
If $D=(d_1,d_2,\ldots,d_p)$ is a nonincreasing sequence of positive integers, then the spectral radius of  graph $G_D$ is
$$\rho(G_D)=\rho(H(D))^{1/2}.$$
\end{lemma}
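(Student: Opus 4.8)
The plan is to work with the square of the adjacency matrix and exploit its block-diagonal structure. Squaring the matrix $A$ from \eqref{eq_A} gives
\[
A^2=\begin{pmatrix} F(D)F(D)^T & O \\ O & F(D)^TF(D)\end{pmatrix}=\begin{pmatrix} H(D) & O \\ O & F(D)^TF(D)\end{pmatrix},
\]
so the spectrum of $A^2$ is the union, counted with multiplicity, of the eigenvalues of $H(D)=F(D)F(D)^T$ and those of $F(D)^TF(D)$.

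I would then invoke two standard facts. First, since $A$ is real and symmetric its eigenvalues are real, and the eigenvalues of $A^2$ are precisely their squares; moreover $A$ is entrywise nonnegative, so by the Perron--Frobenius theorem $\rho(G_D)=\rho(A)$ is the largest eigenvalue of $A$ and coincides with $\max_i|\lambda_i(A)|$. Consequently the largest eigenvalue of $A^2$ equals $\rho(A)^2$. Second, for any matrix $F$ the Gram matrices $FF^T$ and $F^TF$ have the same nonzero eigenvalues (the squared singular values of $F$), hence the same largest eigenvalue; in particular $\rho(H(D))=\rho\bigl(F(D)^TF(D)\bigr)$.

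Combining these with the block decomposition yields
\[
\rho(A)^2=\lambda_{\max}(A^2)=\max\bigl\{\rho(H(D)),\,\rho(F(D)^TF(D))\bigr\}=\rho(H(D)),
\]
where the final equality uses the coincidence of the two largest eigenvalues above. Since $H(D)$ is positive semidefinite we have $\rho(H(D))\ge 0$, so taking square roots gives $\rho(G_D)=\rho(A)=\rho(H(D))^{1/2}$, as claimed.

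I do not anticipate a serious obstacle; the only step requiring a moment of care is confirming that the largest eigenvalue of $A^2$ is contributed by the $H(D)$ block and is not accidentally larger on the $F(D)^TF(D)$ block. This is exactly what the equality of the nonzero spectra of $FF^T$ and $F^TF$ guarantees, and it is also why the result can be stated purely in terms of the smaller matrix $H(D)$ rather than the $q\times q$ block.
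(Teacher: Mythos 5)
Your proof is correct and follows essentially the same route as the paper: square the adjacency matrix, use the block-diagonal form of $A^2$, and invoke the equality of the nonzero spectra of $F(D)F(D)^T$ and $F(D)^TF(D)$ to identify $\rho(A)^2$ with $\rho(H(D))$. The paper's proof is just a terser version of yours, leaving implicit the Perron--Frobenius and symmetric-matrix facts that you spell out.
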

\begin{proof} Let $A$ be the adjacency matrix of $G_D$.
By \eqref{eq_A}, we have
$$A^2=\left(\begin{array}{cc}
    F(D)F(D)^T & O_{p\times q}  \\
    O_{q\times p} & F(D)^TF(D)
    \end{array}\right).$$
Since $F(D)F(D)^T$ and $F(D)^TF(D)$ have the same nonzero eigenvalues and by using (\ref{H}),  we have
$$\rho^2(G_D)=\rho(A^2)=\rho(F(D)F(D)^T)=\rho(H(D)),$$
and the result follows.
\end{proof}

If there are consecutive repeated terms in a sequence, we write them in exponential forms with square brackets on the powers. For example, the sequence $(5,3,1,1)$ will be written as $(5,3,1^{[2]}).$ Let $p, q, k, e$ be positive integers such that $p>2$, $q>kp+2$ and $e=p(q-k)$.  Let
\begin{equation}    \label{eq_D*}
D_k^*:=(q-k+1,(q-k)^{[p-2]},q-k-1).
\end{equation}
Then $G_{D_k^*}=K^\pm_{p,q-k},$ and the associated $H$ matrix of $D_k^*$ in (\ref{H}) is the following $p\times p$ matrix
\begin{equation}\label{HDk*}
H(D_k^*)=\left(\begin{array}{ccccc}
    q-k+1 & q-k & \cdots & q-k & q-k-1  \\
    q-k & q-k & \cdots & q-k & q-k-1  \\
    \vdots & \vdots & \ddots & \vdots & \vdots  \\
    q-k & q-k & \cdots & q-k & q-k-1    \\
    q-k-1 & q-k-1 & \cdots & q-k-1 & q-k-1
    \end{array}\right).\end{equation}
We will investigate $\rho(H(D_k^*))$. Assume that $M$ is a positive (that is, each entry is positive) symmetric matrix in the following block form
$$M=\left(\begin{array}{ccc}
    M_{1,1} & \cdots & M_{1,m}    \\
    \vdots & \ddots & \vdots   \\
    M_{m,1} & \cdots & M_{m,m}
    \end{array}\right)$$
according to a partition $\{X_1,\ldots,X_m\}$ of its row (column) indices, where the diagonal blocks $M_{i,i}$ are square matrices of orders $|X_i|$ for $i=1,\ldots,m.$ Let $b_{ij}$ be the sum of entries of $M_{i,j}$ divided by the number of its rows, {\it i.e.} the average row-sums of $M_{i,j}.$ Then $B=(b_{ij})$ is called a {\it quotient matrix} of $M$. Additionally, if $M_{i,j}$ has a constant row-sum for every $1\leq i,j\leq m$, then $B$ is called an {\it equitable quotient matrix} of $M$. The following lemma is straightforward from the matrix multiplication, and its details can be found in~\cite[Chapter~2]{bh11} and~\cite[Chapter~9]{gr01}. To make this paper self-contained, we give a proof to the lemma.
\begin{lemma}\label{lem_eq_q_mat}
If $B$ is an equitable quotient matrix of a positive symmetric matrix $M$, then every eigenvalue of $B$ is an eigenvalue of $M$. Moreover the spectral radius $$\rho(B)=\rho(M).$$
\end{lemma}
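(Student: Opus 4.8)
The plan is to encode the partition $\{X_1,\ldots,X_m\}$ by its characteristic matrix $S$, the $n\times m$ matrix (where $n$ is the order of $M$) whose $j$-th column is the $0$--$1$ indicator vector of $X_j$; thus each row of $S$ has exactly one nonzero entry, equal to $1$. The entire argument rests on the single identity $MS=SB$. I would verify it entrywise: the $(\ell,j)$ entry of $MS$ is the sum of the entries of row $\ell$ of $M$ lying in the columns indexed by $X_j$, and if $\ell\in X_i$ this is precisely the row-sum of the block $M_{i,j}$ in row $\ell$, which by the equitable hypothesis equals the constant $b_{ij}$; meanwhile the $(\ell,j)$ entry of $SB$ equals $b_{ij}$ as well, since row $\ell$ of $S$ selects exactly row $i$ of $B$. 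This is the only step that uses the equitable assumption; everything that follows is formal.

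For the first assertion, take any eigenpair $Bv=\lambda v$ with $v\neq 0$ and compute $M(Sv)=(MS)v=(SB)v=S(Bv)=\lambda(Sv)$. The columns of $S$ are indicator vectors of disjoint nonempty sets, hence linearly independent, so $S$ has full column rank and $Sv\neq 0$. Therefore $Sv$ is an eigenvector of $M$ with eigenvalue $\lambda$, which shows that every eigenvalue of $B$ is an eigenvalue of $M$. In particular, since $M$ is positive it is irreducible, and $B$ is positive as well (each $b_{ij}$ is an average of positive entries), so by Perron--Frobenius $\rho(B)$ is a genuine eigenvalue of $B$; being then an eigenvalue of the symmetric matrix $M$ and nonnegative, it satisfies $\rho(B)\le\rho(M)$.

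The reverse inequality is where the positivity of $M$ is essential, and this is the step I expect to require the most care. By Perron--Frobenius applied to the positive matrix $B$, there is an entrywise positive eigenvector $v>0$ with $Bv=\rho(B)v$. The key observation is that $Sv$ is then \emph{entrywise} positive, not merely nonnegative: because each row of $S$ contains a single $1$, one has $(Sv)_\ell=v_i$ for $\ell\in X_i$, which is positive. Thus $M$ has a positive eigenvector $Sv$ with eigenvalue $\rho(B)$. Invoking the uniqueness clause of Perron--Frobenius for the positive matrix $M$ -- that the only positive eigenvector, up to scaling, is the Perron vector, whose eigenvalue is $\rho(M)$ -- forces $\rho(B)=\rho(M)$. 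The one subtlety to keep in mind throughout is to exploit positivity, rather than mere nonnegativity, of $Sv$, so that this uniqueness statement applies directly.
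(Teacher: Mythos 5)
Your proof is correct and follows essentially the same route as the paper: introduce the characteristic matrix $S$ of the partition, use the identity $MS=SB$ to lift eigenpairs of $B$ to eigenpairs of $M$, and then invoke the Perron--Frobenius uniqueness of the positive eigenvector of the positive matrix $M$ to force $\rho(B)=\rho(M)$. You in fact supply details the paper leaves implicit (the entrywise verification of $MS=SB$, the full column rank of $S$ guaranteeing $Sv\neq 0$, and the strict positivity of $Sv$), so nothing further is needed.
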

\begin{proof}
Suppose that $B$ is an $m\times m$ equitable quotient matrix of the $n\times n$ matrix $M$ according to the partition $\{X_1,\ldots,X_m\}$. The {\it characteristic matrix} $S=(s_{ij})$ is the $n\times m$ matrix whose $j$-th column is a 0-1 vector with $s_{ij}=1$ if and only if $i\in X_j,$ for $i=1,2, \ldots, n$ and $j=1,2, \ldots,m.$ Let $v$ be a positive eigenvector of $B$ corresponding to the eigenvalue $\lambda(B)$, i.e.,   $Bv=\lambda(B)v$. Then
$$M(Sv)=(MS)v=(SB)v=S(Bv)=S(\lambda(B)v)=\lambda(B)(Sv),$$
and hence $\lambda(B)$ is an eigenvalue of $M.$ Moreover, since $M$ and $B$ are positive, they have unique  positive eigenvectors, up to scalar product, corresponding to their largest eigenvalues $\rho(M)$ and $\rho(B)$ respectively. If the above $v$ is chosen to be an eigenvector of $B$ corresponding to $\rho(B)$, then $\rho(B)$ is an eigenvalue of $M$ with positive eigenvector $Sv$. Hence $\rho(M)=\rho(B)$.
\end{proof}

\begin{lemma}   \label{lem_cubic_poly}
If $p, q, k, e$ are positive integers such that $p>2$, $q>kp+2$ and $e=p(q-k)$,
then $\rho(K_{p, q-k}^\pm)^2$ is the largest root of  the cubic polynomial
\begin{equation}    \label{eq_char_poly_D*}
g(x)=x^3-ex^2+\big((2q-2k-1)(p-1)-1\big)x-(p-2)(q-k-1).
\end{equation}
\end{lemma}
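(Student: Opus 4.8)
The plan is to reduce the spectral radius computation to a $3\times 3$ eigenvalue problem by means of the equitable-quotient technique of Lemma~\ref{lem_eq_q_mat}. By Lemma~\ref{lem_rhoA_rhoHD} applied to the sequence $D_k^*$, together with the identification $G_{D_k^*}=K_{p,q-k}^\pm$ recorded above, we have
$$\rho(K_{p,q-k}^\pm)^2=\rho(H(D_k^*)),$$
so it suffices to analyze the $p\times p$ matrix $H(D_k^*)$ displayed in \eqref{HDk*}. First I would note that $H(D_k^*)$ is a positive symmetric matrix: its entries are $q-k+1$, $q-k$, and $q-k-1$, and the hypotheses $p>2$ and $q>kp+2$ force $q-k-1>0$, so that Lemma~\ref{lem_eq_q_mat} applies.

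Next I would exhibit the partition of $\{1,2,\ldots,p\}$ into the three blocks $X_1=\{1\}$, $X_2=\{2,\ldots,p-1\}$, and $X_3=\{p\}$, corresponding to the first row, the middle rows, and the last row of \eqref{HDk*}. Direct inspection of \eqref{HDk*} shows that each of the nine sub-blocks has constant row sums, so the partition is equitable and the associated quotient matrix is
$$B=\begin{pmatrix} q-k+1 & (p-2)(q-k) & q-k-1 \\ q-k & (p-2)(q-k) & q-k-1 \\ q-k-1 & (p-2)(q-k-1) & q-k-1 \end{pmatrix}.$$
By Lemma~\ref{lem_eq_q_mat}, every eigenvalue of $B$ is an eigenvalue of $H(D_k^*)$ and $\rho(B)=\rho(H(D_k^*))$; since $H(D_k^*)$ is symmetric, all three eigenvalues of $B$ are real.

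The remaining step is to compute $\det(xI-B)$ and check that it equals $g(x)$. I would simplify the determinant by subtracting the second row from the first, which annihilates the last entry and leaves the sparse row $(x-1,\ -x,\ 0)$; expanding along this row reduces the evaluation to two $2\times 2$ determinants. Collecting terms, the $x^2$-coefficient is $-\operatorname{tr}(B)=-\big((q-k+1)+(p-2)(q-k)+(q-k-1)\big)=-p(q-k)=-e$, the constant term is $-(p-2)(q-k-1)$, and the $x$-coefficient simplifies to $(2q-2k-1)(p-1)-1$, matching \eqref{eq_char_poly_D*} exactly. Because the three roots of $g$ are the real eigenvalues of $B$, the largest of them is $\rho(B)=\rho(H(D_k^*))=\rho(K_{p,q-k}^\pm)^2$, as claimed.

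I expect the only delicate point to be the bookkeeping in the final determinant expansion; the equitable-partition reduction itself is forced by the block structure of \eqref{HDk*} and requires only checking that each block has constant row sums.
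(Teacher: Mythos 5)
Your proposal is correct and follows essentially the same route as the paper: reduce to $H(D_k^*)$ via Lemma~\ref{lem_rhoA_rhoHD}, take the equitable quotient matrix for the partition $\{\{1\},\{2,\ldots,p-1\},\{p\}\}$, and verify that its characteristic polynomial is $g(x)$, so that Lemma~\ref{lem_eq_q_mat} identifies the largest root with $\rho(K_{p,q-k}^\pm)^2$. Your quotient matrix matches the paper's $\Pi(H(D_k^*))$ exactly, and your determinant bookkeeping (trace $=e$, constant term $(p-2)(q-k-1)$, middle coefficient $(2q-2k-1)(p-1)-1$) checks out.
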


\begin{proof} As mentioned above $K_{p, q-k}^\pm=G_{D_k^*}$, where $D_k^*$ is the sequence defined in (\ref{eq_D*}).
According to the partition $\Pi=\{\{1\},\{2,\ldots,p-1\},\{p\}\}$, $H(D_k^*)$ in (\ref{HDk*}) has the equitable quotient matrix
$$\Pi(H(D_k^*))=\left(\begin{array}{ccc}
    q-k+1 & (p-2)(q-k) & q-k-1  \\
    q-k & (p-2)(q-k) & q-k-1    \\
    q-k-1 & (p-2)(q-k-1) & q-k-1    \\
    \end{array}\right).$$
By direct computation, we find $g(x)$ to be the characteristic polynomial of $\Pi(H(D_k^*))$. By Lemma~\ref{lem_rhoA_rhoHD} and Lemma~\ref{lem_eq_q_mat}, every root of $g(x)$ is the square of an eigenvalue of $K_{p, q-k}^\pm$,
and $\rho(K_{p, q-k}^\pm)^2$ is the largest root of $g(x)$.
\end{proof}

We shall compare values  $\rho(K_{p, q-k}^\pm)$ and $\rho(G)$ for some expected graphs $G$ in the BFP conjecture. We determine
such $G$ in the following lemma.

\begin{lemma}\label{l2.5}
 If $p, q, k, e$ are positive integers such that $p>2$, $q>kp+2$, and $e=p(q-k)$, then
 the graphs in $\mathcal{K}(p, q, e)$ obtained from a complete bipartite graph by adding one vertex and corresponding number of edges are exactly the following $k$ graphs:
 $${}^eK_{p,q-a}=G_{D_{k, a}},$$
where
\begin{equation}\label{Dka}D_{k,a}=((q-a)^{[p-1]},q-a-(k-a)p)\end{equation}
for $a=0,1,\ldots,k-1$.
 \end{lemma}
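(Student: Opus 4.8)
The plan is to enumerate directly every graph in $\mathcal{K}(p,q,e)$ of the prescribed shape and to match it against the list ${}^eK_{p,q-a}$. First I would pin down the general form: such a $G$ is built from a complete bipartite graph $K_{m_1-1,\,m_2}$ by adjoining one vertex $v$ to the part of size $m_1-1$ (enlarging it to $m_1$) and joining $v$ to exactly $r$ vertices of the opposite part. Non-isolation of $v$ forces $r\ge 1$ and non-completeness forces $r\le m_2-1$, so counting edges via $e=(m_1-1)m_2+r$ yields the two-sided bound
\[
(m_1-1)m_2 \;<\; e \;<\; m_1 m_2. \quad (\star)
\]
Since $G$ is connected (because $m_1-1\ge 1$ and $r\ge1$), its bipartition is unique with class sizes $m_1,m_2$, so embeddability in $K_{p,q}$ with no isolated vertex means at least one of (i) $m_1\le p,\ m_2\le q$ or (ii) $m_1\le q,\ m_2\le p$ holds.

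In the principal case (i) I would first show $m_1=p$: from $(\star)$, $e<m_1m_2\le m_1q$ gives $m_1>e/q=p-pk/q>p-1$ because $q>kp$, and with $m_1\le p$ this pins $m_1=p$. Writing $m_2=q-a$ with $a\ge0$, the edge equation becomes $r=e-(p-1)(q-a)=q-a-(k-a)p$, which is exactly the final entry of $D_{k,a}$, while the other $p-1$ high-degree vertices have degree $m_2=q-a$; hence $G=G_{D_{k,a}}={}^eK_{p,q-a}$. The admissibility range then drops out of $1\le r\le m_2-1$: the inequality $r\le q-a-1$ is equivalent to $(k-a)p\ge1$, i.e.\ $a\le k-1$, and $r\ge1$ holds throughout because $r$ increases in $a$ and already equals $q-kp\ge1$ at $a=0$. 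This recovers precisely the $k$ claimed graphs (pairwise distinct, since their maximum degrees $q-a$ differ), and running the computation backwards confirms each ${}^eK_{p,q-a}$ genuinely lies in $\mathcal{K}(p,q,e)$.

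The step I expect to be the crux is eliminating case (ii), where a priori one might instead attach the new vertex to the large side. Here $(\star)$ together with $m_2\le p$ forces $m_1\ge q-k+1$, and feeding this back through $(m_1-1)m_2<e$ forces $m_2\le p-1$; but then $e\le m_1m_2-1\le q(p-1)-1=pq-q-1$, contradicting $e=pq-pk$ as soon as $q>pk-1$, a condition amply supplied by the hypothesis $q>kp+2$. Thus case (ii) is vacuous and the enumeration in case (i) is complete. The only genuinely delicate bookkeeping is ensuring the two embedding cases are exhaustive (via uniqueness of the bipartition of a connected bipartite graph) and verifying that the algebra $r=q-a-(k-a)p$ reproduces the stated sequence $D_{k,a}$ verbatim; the inequalities themselves are routine consequences of $q>kp+2$.
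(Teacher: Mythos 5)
Your proof is correct, and it follows the same enumeration strategy as the paper: parametrize the candidate graphs by their two part sizes and the location of the added vertex, then use edge-count bounds together with $e=p(q-k)$ and $q>kp+2$ to pin down those sizes. Your case (i) argument ($m_1>e/q>p-1$, hence $m_1=p$, then $r=e-(p-1)(q-a)=q-a-(k-a)p$) is essentially the paper's proof that the small part has size exactly $p$ and its identification of the graphs ${}^eK_{p,q-a}=G_{D_{k,a}}$ for $0\le a\le k-1$. The genuine divergence is the step you yourself flag as the crux: ruling out the configuration where the new vertex attaches to the large side. The paper, having already forced the smaller part to have size $p$, kills the family $K^e_{p,t}$ by an \emph{integrality} argument: the number of missing edges $pt-e=p(t-q+k)$ would have to satisfy $0<p(t-q+k)<p$, impossible for a multiple of $p$. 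You instead run a pure counting argument ($m_1\ge q-k+1$, hence $m_2\le p-1$, hence $e\le q(p-1)-1<p(q-k)=e$), never invoking divisibility. Both are valid: the paper's version is shorter and pinpoints the obstruction (the defect must be a positive multiple of $p$ that is less than $p$), while yours treats the two embedding cases symmetrically with the single bound $(\star)$ and does not need to normalize the size of the second part first. One small point to patch: your connectivity claim quietly assumes $m_1-1\ge 1$; you should note that if the underlying complete bipartite graph were $K_{0,m_2}$, the resulting graph would be a star (or have isolated vertices), hence excluded from $\mathcal{K}(p,q,e)$. The rest of the bookkeeping --- the range $0\le a\le k-1$, pairwise distinctness via the maximum degree $q-a>p$, and membership of each ${}^eK_{p,q-a}$ in $\mathcal{K}(p,q,e)$ --- checks out.
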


 \begin{proof} The desired graphs are $^eK_{s, t}$ or $K_{s, t}^e$ in $\mathcal{K}(p, q, e)$ with $s\leq t$.
If $s<p$ then $pk=pq-e>pq-st\geq pq-(p-1)q=q,$ a contradiction to $q>kp+2$. Hence $s=p$.
If $K_{p, t}^e\in \mathcal{K}(p, q, e)$, then from the definition we have  $p>pt-e>0$, and clearly
$pt-e=pt-p(q-k)=p(t-q+k)$, implying $1>t-q+k>0$, a contradiction to the fact that $t-q+k$ is an integer.
 Hence the remain cases are $^eK_{p, q-a}\in \mathcal{K}(p, q, e)$, where  $a:=q-t$. We need to have $a<k$ since  $pq-kp=e<p(q-a).$
 Indeed  each $^eK_{p, q-a}$ exists in $\mathcal{K}(p, q, e)$ for $a=0, 1, \ldots, k-1$.  The graph ${}^eK_{p,q-a}$ is clear to be $G_{D_{k, a}}$ as stated in the statement.
 \end{proof}

The above lemma indicates that the graphs of type $K^e_{s, t}$ might not exist  in $\mathcal{K}(p, q, e)$ under some special restrictions of $p, q, e$, so there is no hope to further extend Conjecture~\ref{conj_Fu}. Now we focus on graphs of the type  ${}^eK_{s, t}$.

\begin{lemma}\label{l2.6}
If $p,q,k$ are positive integers satisfying $p>2,$ $q>kp+2,$ and $e=p(q-k),$  then
\begin{equation}\label{comp}\rho(K_{p, q-k}^\pm)>\rho({}^eK_{p,q-a})\quad \text{for}\quad a=0,1,\ldots,k-1.\end{equation}
\end{lemma}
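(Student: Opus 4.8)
The plan is to compare the two squared spectral radii through the auxiliary polynomials already produced, so that the whole statement reduces to locating the largest root of the cubic $g$ of Lemma~\ref{lem_cubic_poly} relative to one explicit number.

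First I would pin down the right-hand side. By Lemma~\ref{l2.5} we have $^eK_{p,q-a}=G_{D_{k,a}}$ with $D_{k,a}$ as in \eqref{Dka}, so by Lemma~\ref{lem_rhoA_rhoHD} the quantity $\alpha_a:=\rho(^eK_{p,q-a})^2$ equals $\rho(H(D_{k,a}))$. Reading off $H(D_{k,a})$ from \eqref{H}: the degree sequence has the single value $q-a$ repeated $p-1$ times followed by $q-a-(k-a)p$, so $H(D_{k,a})$ consists of a constant $(p-1)\times(p-1)$ block equal to $q-a$ bordered by a last row and column equal to $q-a-(k-a)p$. Taking the equitable partition $\{\{1,\dots,p-1\},\{p\}\}$ and invoking Lemma~\ref{lem_eq_q_mat}, I would replace $H(D_{k,a})$ by the $2\times 2$ quotient matrix $\left(\begin{smallmatrix}(p-1)(q-a)&q-a-(k-a)p\\(p-1)(q-a-(k-a)p)&q-a-(k-a)p\end{smallmatrix}\right)$, whose characteristic polynomial computes to $f_a(x)=x^2-ex+c_a$ with $c_a:=p(p-1)(k-a)\bigl(q-a-(k-a)p\bigr)$; the linear coefficient is exactly $-e$ because the trace is the degree sum. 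Hence $\alpha_a$ is the larger root of $f_a$, and in particular $\alpha_a^2=e\alpha_a-c_a$.

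The core of the argument is then a root-location estimate for the cubic, written as $g(x)=x^3-ex^2+\gamma x-\delta$ with $\gamma=(2q-2k-1)(p-1)-1$ and $\delta=(p-2)(q-k-1)$. Writing $\beta:=\rho(K_{p,q-k}^\pm)^2$ for the largest root of $g$, I would use that $g$ has positive leading coefficient: it suffices to show $g(\alpha_a)<0$, for then $g$ has a root strictly exceeding $\alpha_a$, whence $\beta>\alpha_a$ and the desired $\rho(K_{p,q-k}^\pm)>\rho(^eK_{p,q-a})$ follows by taking positive square roots. Reducing $\alpha_a^3$ and $\alpha_a^2$ modulo the relation $\alpha_a^2=e\alpha_a-c_a$ makes the $ex^2$-terms cancel and leaves the clean linear expression $g(\alpha_a)=(\gamma-c_a)\alpha_a-\delta$.

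It remains to verify signs, and the inequality $c_a>\gamma$ is the step I expect to be the main (if elementary) obstacle, since $c_a$ and $\gamma$ are of comparable magnitude and the hypotheses must be used sharply. The factor $\delta=(p-2)(q-k-1)$ is positive because $p>2$ and $q>kp+2>k+1$, and $\alpha_a>0$ as the larger root of $f_a$; so $g(\alpha_a)<0$ will follow once $\gamma-c_a\le 0$. To see $c_a>\gamma$ I would set $m:=k-a\in\{1,\dots,k\}$ and $t:=q-a-(k-a)p=q-k-m(p-1)$, the smallest degree in $D_{k,a}$; since $m\le k$ and $q>kp+2$ one gets $t\ge q-kp>2$, i.e.\ $t\ge 3$. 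Factoring out $p-1$ gives $c_a-\gamma=(p-1)\bigl[(pm-2)t-2m(p-1)+1\bigr]+1$, and using $pm-2>0$ together with $t\ge 3$ bounds the bracket below by $m(p+2)-5\ge 0$ for $m\ge 1$, $p\ge 3$. Therefore $c_a-\gamma\ge 1>0$, so $g(\alpha_a)=(\gamma-c_a)\alpha_a-\delta<0$ and the comparison is complete. The only delicate point is the boundary case $m=1$, $p=3$, where the bracket bound is exactly $0$ and it is the outer $+1$ that actually secures the strict inequality.
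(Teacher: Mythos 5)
Your proposal is correct; every step checks out. It shares the paper's skeleton --- both squared spectral radii are realized as largest roots of characteristic polynomials of equitable quotient matrices via Lemmas~\ref{lem_rhoA_rhoHD} and \ref{lem_eq_q_mat}, and then compared against the cubic $g$ of Lemma~\ref{lem_cubic_poly} --- but the execution differs in three ways worth recording. (i) You quotient $H(D_{k,a})$ by the two-part partition $\{\{1,\ldots,p-1\},\{p\}\}$, which is indeed equitable since the first $p-1$ rows of $H(D_{k,a})$ are identical, obtaining the quadratic $f_a(x)=x^2-ex+c_a$ with $c_a=p(p-1)(k-a)\bigl(q-a-(k-a)p\bigr)$; the paper uses the three-part partition $\{\{1\},\{2,\ldots,p-1\},\{p\}\}$ and gets the cubic $f(x)=x\,f_a(x)$, whose extra root $0$ carries no information. (ii) To compare roots, the paper proves $f(x)-g(x)>0$ for all $x>0$; you instead reduce $g(\alpha_a)$ modulo the relation $\alpha_a^2=e\alpha_a-c_a$ to get $g(\alpha_a)=(\gamma-c_a)\alpha_a-\delta$ and conclude from $g(\alpha_a)<0$ that $\alpha_a$ lies strictly below the largest root of $g$. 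Since $f(\alpha_a)=0$ forces $g(\alpha_a)=-(f-g)(\alpha_a)$, this is the same quantity in disguise; your sign-at-a-point formulation and the paper's coefficientwise inequality are equally valid ways to finish. (iii) The genuine difference is the verification of $c_a>\gamma$: your substitution $m=k-a$, $t=q-a-(k-a)p$, with $t\ge q-kp\ge 3$ from the hypothesis $q>kp+2$, yields the uniform bound $c_a-\gamma=(p-1)\bigl[(pm-2)t-2m(p-1)+1\bigr]+1\ge (p-1)\bigl[m(p+2)-5\bigr]+1\ge 1$, handling all $a$ at once; the paper instead observes that the relevant coefficient is quadratic in $a$ with negative leading coefficient, checks only the endpoint cases $a=0$ and $a=k-1$, and needs the auxiliary inequality $pk>p+k$ (for $p\ge 3$, $k\ge 2$) to settle the second case. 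Your treatment eliminates that case analysis entirely; what the paper's argument buys in exchange is the marginally stronger fact that $f>g$ on all of $(0,\infty)$, which the lemma does not require.
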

\begin{proof}
As mentioned above ${}^eK_{p,q-a}=G_{D_{k, a}}$, where $D_{k,a}$ is the sequence defined in (\ref{Dka}).
 Let $H(D_{k, a})$ be defined as in  (\ref{H}) with $D=D_{k, a}$.
  According to the partition $\Pi=\{\{1\}, \{2,\ldots,p-1\},\{p\}\}$, $H(D_{k, a})$ has  equitable quotient matrix
 $$\Pi(H(D_{k, a}))=\left(\begin{array}{ccc}
    q-a & (p-2)(q-a) &  q-a-(k-a)p  \\
    q-a & (p-2)(q-a) &  q-a-(k-a)p  \\
    q-a-(k-a)p & (p-2) (q-a-(k-a)p) &  q-a-(k-a)p   \\
    \end{array}\right).$$
The characteristic polynomial of $\Pi(H(D_{k, a}))$ is
$$f(x)=x^3-ex^2+(k-a)p(p-1)(q-a-(k-a)p)x,$$
whose largest root is $\rho({}^eK_{p,q-a})^2$ by Lemma~\ref{lem_rhoA_rhoHD} and Lemma~\ref{lem_eq_q_mat}.
Notice that the least value of the coefficient of $x$ in $f(x)$ among $a\in [0, k-1]$ is attained when either $a=0$ or $a=k-1$,
since this coefficient is a quadratic polynomial in $a$ with leading coefficient $-p(p-1)^2$, a negative number.
On the other hand, recall that $\rho(K_{p, q-k}^\pm)^2$ is the largest root of  $g(x)$ in~\eqref{eq_char_poly_D*} by Lemma~\ref{lem_cubic_poly}. The difference $f(x)-g(x)$ of $f(x)$ and $g(x)$ is
$$\big((p-1)[p(k-a)(q-a-(k-a)p)-2q+2k+1]+1\big)x+(p-2)(q-k-1),$$
whose constant term $(p-2)(q-k-1)$ in $f(x)-g(x)$ is positive from the assumptions.  The coefficient of $x$ in $f(x)-g(x)$ takes the least value in one of the following two positive values:
$$\left\{\begin{array}{ll}
(p-1)[(kp-2)(q-kp-2)+2k-3]+1,& \hbox{if $a=0$,} \\
(p-1)[(p-2)(q-p-k-2)+p-3]+1 & \hbox{if $a=k-1>0$}.
\end{array}
\right.$$
Here we use  $pk> p+k$ if $p\geq 3$ and $k\geq 2$ to ensure $q-p-k-2>q-pk-2>0$ in the case $a=k-1>0$.
Thus $f(x)>g(x)$ for all $x>0$. In particular, for $x\geq \rho(K_{p, q-k}^\pm)^2$,  $f(x)>g(x)\geq0=g(\rho(K_{p, q-k}^\pm)^2)$, which implies
that the largest root $\rho({}^eK_{p,q-a})^2$ of $f(x)$ is less than $\rho(K_{p, q-k}^\pm)^2$, and
(\ref{comp}) follows.
\end{proof}

\begin{theorem}   \label{thm_e_p}
If $p,q,k$ are positive integers satisfying $p>2,$ $q>kp+2,$ and $e=p(q-k)$, then Conjecture~\ref{conj_BFP} is false.
\end{theorem}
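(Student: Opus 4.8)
The plan is to show that Conjecture~\ref{conj_BFP} fails by exhibiting an explicit competitor graph that beats every graph the conjecture proposes as extremal. The conjecture asserts that the maximum of $\rho(G)$ over $\mathcal{K}(p,q,e)$ is attained by a graph obtained from a complete bipartite graph by adding one vertex and a corresponding number of edges. The strategy is therefore: (i) enumerate \emph{all} such candidate graphs lying in $\mathcal{K}(p,q,e)$ under the hypotheses $p>2$, $q>kp+2$, $e=p(q-k)$, and (ii) produce a single graph in $\mathcal{K}(p,q,e)$ whose spectral radius strictly exceeds that of every candidate. If both are accomplished, no conjectured extremal graph can actually be extremal, so the conjecture is false.

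Step (i) is already done by Lemma~\ref{l2.5}: under the stated hypotheses the only graphs of the prescribed form in $\mathcal{K}(p,q,e)$ are the $k$ graphs ${}^eK_{p,q-a}=G_{D_{k,a}}$ for $a=0,1,\ldots,k-1$, the key point being that the hypothesis $q>kp+2$ forces $s=p$ and rules out every graph of type $K^e_{s,t}$. Step (ii) is handled by Lemma~\ref{l2.6}, which supplies the explicit competitor $K^\pm_{p,q-k}$ and establishes the strict inequality $\rho(K^\pm_{p,q-k})>\rho({}^eK_{p,q-a})$ for all admissible $a$. Thus the proof of the theorem reduces to citing these two lemmas: first invoke Lemma~\ref{l2.5} to assert that the graphs named in Conjecture~\ref{conj_BFP} are exactly the ${}^eK_{p,q-a}$, then invoke Lemma~\ref{l2.6} to conclude that $K^\pm_{p,q-k}\in\mathcal{K}(p,q,e)$ strictly dominates each of them in spectral radius.

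I would first check the membership claim $K^\pm_{p,q-k}\in\mathcal{K}(p,q,e)$: the sequence $D_k^*$ of \eqref{eq_D*} has entries summing to $(q-k+1)+(p-2)(q-k)+(q-k-1)=p(q-k)=e$, its largest part is $q-k+1\le q$, and $q>kp+2$ guarantees the smallest part $q-k-1$ is a positive integer, so $G_{D_k^*}$ is a genuine non-complete-bipartite subgraph of $K_{p,q}$ with $e$ edges and no isolated vertices. Then the conclusion is immediate: since $\rho(K^\pm_{p,q-k})$ exceeds the spectral radius of every graph the conjecture declares extremal, and $K^\pm_{p,q-k}$ itself lies in the feasible family, the maximum in $\max_{G\in\mathcal{K}(p,q,e)}\rho(G)$ is \emph{not} attained by any conjectured graph, contradicting the statement of Conjecture~\ref{conj_BFP}.

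The genuinely hard part of the argument is not the theorem itself but Lemma~\ref{l2.6}, whose proof reduces the spectral comparison to comparing the two cubic characteristic polynomials $f(x)$ and $g(x)$ via the equitable-quotient-matrix machinery of Lemma~\ref{lem_eq_q_mat} together with Lemma~\ref{lem_rhoA_rhoHD}. There the delicate points are verifying that the linear coefficient of $f(x)$, a downward-opening quadratic in $a$, attains its minimum over $a\in\{0,\ldots,k-1\}$ at an endpoint, and that the resulting coefficient of $x$ in $f(x)-g(x)$ stays positive in both endpoint cases—where the inequality $pk>p+k$ for $p\ge3$, $k\ge2$ is exactly what keeps $q-p-k-2>0$. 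Once $f(x)>g(x)$ holds for all $x>0$, evaluating at $x=\rho(K^\pm_{p,q-k})^2$, where $g$ vanishes, forces the largest root of $f$ to lie strictly below it. Given that this analytic work is already carried out in the preceding lemmas, the theorem's proof is then purely a matter of assembling these two ingredients.
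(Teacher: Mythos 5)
Your proposal is correct and takes essentially the same route as the paper, whose proof of Theorem~\ref{thm_e_p} is precisely the two-step assembly you describe: Lemma~\ref{l2.5} identifies the conjectured extremal graphs as the $k$ graphs ${}^eK_{p,q-a}$, and Lemma~\ref{l2.6} shows $\rho(K^\pm_{p,q-k})$ strictly exceeds all of their spectral radii. Your extra verification that $K^\pm_{p,q-k}=G_{D_k^*}$ indeed lies in $\mathcal{K}(p,q,e)$ (degree sum $e$, largest part at most $q$, smallest part positive, not complete bipartite) is a sound sanity check that the paper leaves implicit.
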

\begin{proof}
The theorem immediately follows from Lemma~\ref{l2.5} and Lemma~\ref{l2.6}.

\end{proof}

\section*{Acknowledgments}
This research is supported by the Ministry of Science and Technology of Taiwan R.O.C. under the projects MOST 110-2811-M-A49-505, MOST 109-2115-M-031-006-MY2, and MOST 109-2115-M-009-007-MY2.

\end{document}